\documentclass{article}
\usepackage{amsfonts}
\usepackage{amsthm}
\usepackage{amsmath}
\usepackage{tikz}

\DeclareMathOperator{\Sym}{Sym}

\DeclareMathOperator{\Hom}{Hom}
\DeclareMathOperator{\tr}{tr}
\newcommand{\id}{\mathrm{id}}

\newtheorem{theorem}{Theorem}[section]
\newtheorem{lemma}[theorem]{Lemma}

\newtheorem{definition}[theorem]{Definition}

\title{The Brauer category $\mathcal{B}(2)$ has \\ principal graph $D_\infty$}
\author{Stephen Bigelow}
\date{June 2026}

\begin{document}

\maketitle

\begin{abstract}
We show that the subfactor planar algebra with principal graph $D_\infty$
is the Brauer planar algebra with bubble constant $\delta=2$.  The Brauer
algebra is similar to the Temperley-Lieb algebra, but with virtual
crossings.  At $\delta=2$, it relates to the category of representations
of the orthogonal group $O(2)$.  We work over any commutative ring with
$1/2$.  Its principal graph encodes information about the corresponding
monoidal category.
\end{abstract}

\section{Introduction}
\label{sec:intro}

This paper grew out of my interest in the skein theory of subfactor planar
algebras.  Skein theory is the approach to studying the planar algebra in
terms of linear combinations of diagrams, modulo certain local relations.

Jones introduced planar algebras in the 1990s, now published in
\cite{MR4374438}, as a way to encode the standard invariant of a
type $\mathrm{II}_1$ subfactor.  The \emph{principal graph} and the
\emph{index} are two other invariants of subfactors.  Hyperfinite
$\mathrm{II}_1$ subfactors of index less than or equal to $4$ have an
``ADE'' classification \cite[Section~5.2]{MR1278111}.

My own introduction to this subject was \cite{MR2559686}, which finds
a skein theory for the subfactor planar algebra with principal graph
$D_{2n}$.  The aim of this paper is to do something similar for the
affine Dynkin diagram $D_\infty$.
$$
D_\infty=
\begin{tikzpicture}[baseline=-0.5ex]
\node (BASE) at (240:1) [shape=circle,draw]{};
\node (Q) at (120:1) [shape=circle,draw]{};
\node (X) at (0,0) [shape=circle,draw]{};
\node (P) at (1,0) [shape=circle,draw]{};
\node (P3) at (2,0) [shape=circle,draw]{};
\node (P4) at (3,0) [shape=circle,draw]{};
\node [left] at (BASE.west) {$0$};
\node [left] at (X.west) {$1$};
\node [left] at (Q.west) {$2'$};
\node [above] at (P.north) {$2$};
\node [above] at (P3.north) {$3$};
\node [above] at (P4.north) {$4$};
\node (ETC) at (4,0) {\dots};
\draw (BASE)--(X);
\draw (Q)--(X);
\draw (X)--(P)--(P3)--(P4)--(ETC);
\end{tikzpicture}
$$
Here, we have numbered the vertices for future reference.  Vertex
$0$ is the base vertex.

It turns out that the planar algebra in this case is the reduced Brauer
algebra at the value $\delta=2$.  Here, $\delta$ is the value of a bubble,
and the index is $\delta^2$.

The $D_\infty$ subfactor was studied by Bisch and Jones \cite{MR1437496}.
It is a special case of a Fuss-Catalan subfactor, and they give generators
and relations that are equivalent to ours.  What we will call the leaf
relation in Lemma~\ref{lem:leaf} is a version of their characterisation
of a Jones projection onto an intermediate subfactor as a biprojection
\cite{MR1733737}.

From the other direction, the Brauer algebra is well known to
representation theorists.  For example, Lehrer and Zhang \cite{MR3420509}
describe the category of representations of the orthogonal group $O(n)$
as a quotient of the Brauer category.  We are concerned with $O(2)$,
the first case that is not completely trivial.

Some experts already know a diagrammatic description of the $D_\infty$
planar algebra in terms of generators and relations.  One was given in
Molander's PhD thesis.

So, in some sense, the example studied in this paper is already well
understood.  However the connection between the principal graph $D_\infty$
and the Brauer algebra does not seem to have been noted.  I also tried
to make the results as generally applicable as possible, working over
any ring that has $1/2$, and not using the shading or unitarity axioms
from Jones's original definition of a subfactor planar algebra.

\section{The Brauer category}
\label{sec:brauer}

We define the Brauer category $\mathcal{B}$, and the reduced Brauer
category $\overline{\mathcal{B}}$.  They depend on a commutative unital
ring $R$, and a scalar $\delta$.  We take $R$ to be any commutative
unital ring with $1/2$, and we will always assume $\delta=2$.  We use the
terms Brauer planar algebra and Brauer category almost interchangeably.
Thought of as a category, diagrams drawn in a rectangle are morphisms,
and planar ways of connecting diagrams correspond to operations in
the category.

A \emph{Brauer diagram} consists of finitely many strands drawn in a
rectangle, having their endpoints on the top and bottom edge of the
rectangle.  Two such diagrams are considered equal if they connect the
same pairs of endpoints on the boundary of the rectangle.

The objects of $\mathcal{B}$ are the natural numbers, including zero.
The morphisms from $m$ to $n$ are formal $R$-linear combinations of Brauer
diagrams that have $m$ endpoints on the bottom and $n$ on the top.  If $m$
and $n$ do not have the same parity then there are no such diagrams,
and the only morphism from $m$ to $n$ is the zero morphism.

The composition $AB$ of diagrams is given by stacking $A$ on top of $B$,
deleting any closed loops in the resulting diagram, and multiplying by $2$
to the power of the number of loops that were deleted.  Extend this to a
bilinear operation on morphisms.  The identity morphism $\id_n$ is the
diagram consisting of $n$ parallel vertical strands.

There is a tensor product operation that turns $\mathcal{B}$ into a strict
monoidal category.  The tensor product of objects is given by addition.
The tensor product of diagrams $A\otimes B$ be given by placing $B$
to the right of $A$.  Extend this to a bilinear operation on morphisms.

There are also duals that turn $\mathcal{B}$ into a strict pivotal
category.  The objects are all self-dual.  The dual $A^\dagger$ of a
diagram is obtained by rotating $A$ by $180$ degrees.  Extend this to
a linear operation on morphisms.  The evaluation map from
$n \otimes n$ to $0$ is the diagram consisting of $n$ parallel strands
forming a ``rainbow''.  The coevaluation map is dual to the evaluation
map.

An \emph{$n$-box} is just an endomorphism of $n$.  The space of $n$-boxes
is an associative algebra, called the Brauer algebra $\mathcal{B}_n$.
The space of $0$-boxes is isomorphic to $R$, generated by the empty
diagram.

To calculate the \emph{trace} $\tr(A)$ of an $n$-box diagram $A$, first
connect the $n$ endpoints at the top to the $n$ endpoints at the bottom
by disjoint arcs that pass around $A$ to the right.  Then $\tr(A)$ is $2$
to the power of the number of loops in the resulting diagram.  Extend
this to a linear operation on endomorphisms.  Note that $\mathcal{B}$
is \emph{spherical}, meaning that the left and right traces are the same.

We now add one more relation.  Let $A_3$ denote the $3$-box corresponding
to the \emph{antisymmetrizer} of the symmetric group $\Sym_3$.
$$
A_3 =
\begin{tikzpicture}[baseline=-0.5ex,scale=0.15]
\draw (-2,-2) -- (-2,2);
\draw (0,-2) -- (0,2);
\draw (2,-2) -- (2,2);
\end{tikzpicture}
\;-\;
\begin{tikzpicture}[baseline=-0.5ex,scale=0.15]
\draw (-2,-2) -- (-2,2);
\draw (0,-2) .. controls (0,0) and (2,0) .. (2,2);
\draw (2,-2) .. controls (2,0) and (0,0) .. (0,2);
\end{tikzpicture}
\;+\;
\begin{tikzpicture}[baseline=-0.5ex,scale=0.15]
\draw (-2,-2) .. controls (-2,0) and (0,0) .. (0,2);
\draw (0,-2) .. controls (0,0) and (2,0) .. (2,2);
\draw (2,-2) .. controls (2,0) and (-2,0) .. (-2,2);
\end{tikzpicture}
\;-\;
\begin{tikzpicture}[baseline=-0.5ex,scale=0.15]
\draw (-2,-2) .. controls (-2,-1) and (2,-1) .. (2,2);
\draw (0,-2) .. controls (0,-1) and (-1,-1) ..
      (-1,0) .. controls (-1,1) and (0,1) ..  (0,2);
\draw (2,-2) .. controls (2,1) and (-2,1) .. (-2,2);
\end{tikzpicture}
\;+\;
\begin{tikzpicture}[baseline=-0.5ex,scale=0.15]
\draw (-2,-2) .. controls (-2,0) and (2,0) .. (2,2);
\draw (0,-2) .. controls (0,0) and (-2,0) .. (-2,2);
\draw (2,-2) .. controls (2,0) and (0,0) .. (0,2);
\end{tikzpicture}
\;-\;
\begin{tikzpicture}[baseline=-0.5ex,scale=0.15]
\draw (-2,-2) .. controls (-2,0) and (0,0) .. (0,2);
\draw (0,-2) .. controls (0,0) and (-2,0) .. (-2,2);
\draw (2,-2) -- (2,2);
\end{tikzpicture}
$$
Let the \emph{reduced Brauer category} $\overline{\mathcal{B}}$ be
the quotient of $\mathcal{B}$ by $A_3$.  Thus $\overline{\mathcal{B}}$
has the same objects, but we quotient the $\Hom$ spaces by $A_3=0$
and any consequence of this by compositions and tensor products.

The significance of $A_3$ is that it is \emph{negligible}
in $\mathcal{B}$, meaning that the trace of $fA_3$ is zero for any
$3$-box $f$.  Thus the space of $0$-boxes in $\overline{\mathcal{B}}$
is still isomorphic to $R$.

Note that we could also have defined $\overline{\mathcal{B}}$ in terms
of diagrammatic generators and relations.  It is generated by a virtual
crossing, which is a $2$-box.  The relations state that a bubble is $2$,
the virtual crossing has $90$ degree rotational symmetry and satisfies
the three Reidemeister moves, and $A_3=0$.  We use the terminology
\emph{Reidemeister move} even though all crossings are virtual, and
perhaps \emph{detour move} would be more accurate.

\section{Principal graphs}

Let $\mathcal{C}$ be an $R$-linear spherical category whose objects are
the natural numbers, including zero.  Assume the objects all self-dual,
with the tensor product given by addition.

We now explain what we mean when we say $\mathcal{C}$ has a
given principal graph.  It would be more accurate to think of the
principal graph as encoding information about the Cauchy completion of
$\mathcal{C}$, but we will make some definitions so that we can avoid
changing the category.

An \emph{idempotent} is an endomorphism $P$ such that $P^2=P$.  If $P$
and $Q$ are idempotents then the \emph{$\Hom$-space} $\Hom(P,Q)$ is
the $R$-module of morphisms of the form $QfP$.

We say two idempotents $P$ and $Q$ are \emph{isomorphic}, denoted
$P \simeq Q$, if there exist morphisms $f$ and $g$ such that $P=gf$ and
$Q=fg$.  We say an idempotent $P$ is isomorphic to a \emph{direct sum}
of idempotents $Q_1,\dots Q_n$ if $P=P_1+\dots+P_n$ for some idempotents
$P_i$ such that $P_iP_j=0$ whenever $i\neq j$, and $P_i$ is isomorphic
to $Q_i$ for all $i$.

A \emph{brick} is an idempotent $P$ such that $\Hom(P,P)$ is isomorphic to
$R$, generated by $P$.  Two idempotents $P$ and $Q$ are \emph{orthogonal}
if $\Hom(P,Q)=0$.

Let $\Gamma$ be a locally finite connected graph with a fixed base vertex.

\begin{definition}
\label{def:principal}
Let $\mathcal{C}$ and $\Gamma$ be as above.  We say $\mathcal{C}$
has \emph{principal graph} $\Gamma$ if we can assign a brick $P_v$
to each vertex $v$ such that the $P_v$ are pairwise orthogonal, if $v$
is the base vertex then $P_v=\id_0$, and every $P_v \otimes \id_1$ is
isomorphic to the direct sum of $P_w$ over all vertices $w$ adjacent to
$v$ in $\Gamma$.
\end{definition}

\section{The principal graph of the Brauer category}

The aim of this section is to prove the following.

\begin{theorem}
\label{thm:principal}
The reduced Brauer category $\overline{\mathcal{B}}$ at
$\delta=2$ has principal graph $D_\infty$, in the sense of
Definition~\ref{def:principal}.
\end{theorem}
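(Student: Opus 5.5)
The plan is to construct the bricks explicitly, modelled on the representation theory of $O(2)$: $P_n$ should correspond to the traceless symmetric tensors, and $P_{2'}$ to the determinant representation. Write $X$ for the virtual crossing and $E$ for the cup-cap $2$-box, and let $X_i$ and $E_i$ act on strands $i,i+1$. I would set
$$P_0=\id_0,\qquad P_1=\id_1,\qquad P_{2'}=\tfrac12(\id_2-X),\qquad P_2=\tfrac12(\id_2+X)-\tfrac12 E .$$
For $n\ge 2$ I would define recursively
$$P_{n+1}=P_n\otimes\id_1-Q_n,\qquad Q_n=(P_n\otimes\id_1)E_n(P_n\otimes\id_1).$$

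The first step is to prove, by induction, that each $P_n$ ($n\ge1$) has three properties:
\begin{itemize}
\item it is an idempotent;
\item it absorbs crossings, $P_nX_i=X_iP_n=P_n$;
\item it kills caps, $P_nE_i=E_iP_n=0$;
\end{itemize}
and also that its right partial trace is $P_{n-1}$ for $n\ge2$. The partial trace of $P_1$ is $2\,\id_0$, which explains the factor $\tfrac12$ in front of $E$ in $P_2$. The partial-trace computation gives $Q_n^2=Q_n$. Killing caps gives $(P_n\otimes\id_1)Q_n=Q_n$, so $P_{n+1}$ is an idempotent orthogonal to $Q_n$.

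Absorption of crossings by $P_{n+1}$ is the first place the relation $A_3=0$ should enter. I would sandwich $A_3$ on strands $n-1,n,n+1$ between copies of $P_n\otimes\id_1$. Since $P_n$ absorbs $X_{n-1}$ and kills $E_{n-1}$, the six terms collapse to a linear relation among $(P_n\otimes 1)X_n(P_n\otimes 1)$, $P_n\otimes 1$ and $Q_n$. Roughly, it should say $(P_n\otimes1)X_n(P_n\otimes1)=P_n\otimes1-2Q_n$, up to checking the coefficients.

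The second and central step is the claim that $\Hom(P_n\otimes\id_1,P_n\otimes\id_1)$ is spanned by $P_n\otimes\id_1$ and $Q_n$ for $n\ge2$. Any Brauer diagram sandwiched between $P_n\otimes\id_1$ on both sides is determined, after absorbing crossings among the first $n$ strands and using that caps there vanish, by how the last strand interacts with the block. The possibilities are: it goes straight through, it turns back with a cup-cap, or it crosses into the block. The last case is reduced by the $A_3$ relation above. This shows $P_{n+1}$ is a brick with rank-one endomorphisms. For $n=1$, the three idempotents $E/2$, $P_{2'}$ and $P_2$ sum to $\id_2$, are mutually orthogonal, and are bricks, since $\mathcal{B}_2$ is spanned by $\id_2,X,E$.

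The third step establishes the isomorphisms. I would show $Q_n\simeq P_{n-1}$ using $f=(P_{n-1}\otimes\mathrm{cap})(P_n\otimes\id_1)$ and $g=(P_n\otimes\id_1)(P_{n-1}\otimes\mathrm{cup})$; then $fg=P_{n-1}$ is exactly the partial-trace identity. Next I would check the branch at $1$ and $2'$. Here $P_{2'}\otimes\id_1$ must be isomorphic to $P_1$ alone, meaning $P_{2'}\otimes\id_1$ equals its cup-cap part. That is precisely $A_3=0$ rewritten, because the full antisymmetrizer on three strands vanishes. Combined with the decomposition of $P_2\otimes\id_1$ as $P_3$ plus $Q_2\simeq P_1$, this gives every required adjacency in $D_\infty$.

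Finally, orthogonality. For $P_n$ and $P_m$ of different parity it is automatic. For $n\neq m$ of the same parity, any diagram from $m$ to $n$ strands with $n>m$ has a cap on the top side, so it is killed by $P_n$. For $P_2$ against $P_{2'}$, one is symmetric and the other antisymmetric under $X$, so $P_2fP_{2'}=P_2XfP_{2'}$ forces the Hom-space to be zero once $\tfrac12$ is available.

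I expect the main obstacle to be the spanning claim in the second step: a careful normal form for diagrams in $\Hom(P_n\otimes\id_1,P_n\otimes\id_1)$, together with exactly the right use of $A_3$. The first thing I would do there is write out explicitly the $A_3$ sandwich computation, keeping in mind that it must be valid over any ring containing $\tfrac12$.
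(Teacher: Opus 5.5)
There is a genuine gap at the one step where $A_3=0$ has to do real work. Your recursion $P_{n+1}=P_n\otimes\id_1-Q_n$ is otherwise a legitimate alternative to the paper's staircase definition. Idempotence, cap-killing and $\mathrm{ptr}(P_{n+1})=P_n$ follow formally from the inductive hypotheses, and $Q_n\simeq P_{n-1}$ is correct.

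The trouble is that sandwiching $A_3$ on strands $n-1,n,n+1$ between copies of $P_n\otimes\id_1$ gives no information. With $s=X_{n-1}$ and $t=X_n$ one has $A_3=(1-s)(1-t+ts)$. Since $(P_n\otimes\id_1)s=P_n\otimes\id_1$, the product $(P_n\otimes\id_1)A_3$ is already $0$ in the unreduced Brauer category. The six terms cancel to $0=0$, not to a relation among $(P_n\otimes\id_1)X_n(P_n\otimes\id_1)$, $P_n\otimes\id_1$ and $Q_n$.

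Your guessed relation is also false. By Reidemeister one the left side has trace $\tr(P_n)=2$, whereas $P_n\otimes\id_1-2Q_n$ has trace $4-4=0$. The true identity for $n\ge2$ is $(P_n\otimes\id_1)X_n(P_n\otimes\id_1)=P_n\otimes\id_1-Q_n$. Your spanning argument in the second step reduces the crossing case to ``the $A_3$ relation above'', so it inherits the gap. As written, you have shown neither that $P_{n+1}$ absorbs $X_n$ nor that it is a brick.

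What is missing is a rotated form of $A_3=0$ in which a cup-cap appears. This is the leaf relation of Lemma~\ref{lem:leaf} turned through $180$ degrees: $\id_1\otimes P_{2'}=2(\id_1\otimes P_{2'})E_1(\id_1\otimes P_{2'})$. You already invoke it in your third step for the branch at $2'$, but it is needed here too.

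Put it on strands $n-1,n,n+1$ directly above $P_{n+1}$ and expand the lower $P_{2'}=\frac12(\id_2-X)$.
\begin{itemize}
\item The identity term contains $E_{n-1}P_{n+1}=0$.
\item The crossing term contains $E_{n-1}X_nP_{n+1}=E_{n-1}X_nX_{n-1}P_{n+1}=X_nX_{n-1}E_nP_{n+1}=0$.
\end{itemize}
Hence $(\id_{n-1}\otimes P_{2'})P_{n+1}=0$, which says $X_nP_{n+1}=P_{n+1}$. This uses only the cap-killing you already get from the partial trace, plus absorption of $X_{n-1}$. It is exactly the paper's lemma that an uncappable box which eats a crossing at one position eats crossings everywhere.

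With absorption in hand, your spanning claim goes through. The cross terms $P_{n+1}X_nQ_n$ and $Q_nX_nP_{n+1}$ vanish. Also $Q_nX_nQ_n=cQ_n$ because $Q_n\simeq P_{n-1}$ is a brick, and comparing traces gives $2=2+2c$, so $c=0$.

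You should also record that $\tr(P_{n+1})=2$ is a unit. That is what makes $\Hom(P_{n+1},P_{n+1})$ free of rank one rather than a quotient of $R$.
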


Number the vertices $D_\infty$ as shown in Section~\ref{sec:intro}.
Let $P_0$ be the empty diagram, and let the other idempotents be as
follows.
\[
P_1 = 
\begin{tikzpicture}[baseline=-0.5ex,scale=0.2]
\draw (0,-2) -- (0,2);
\end{tikzpicture}
\;,\quad
P_{2'} = \frac12 \left(
\;
\begin{tikzpicture}[baseline=-0.5ex,scale=0.2]
\draw (-1,-2) -- (-1,2);
\draw (1,-2) -- (1,2);
\end{tikzpicture}
-
\begin{tikzpicture}[baseline=-0.5ex,scale=0.2]
\draw (-1,-2) .. controls (-1,0) and (1,0) .. (1,2);
\draw (1,-2) .. controls (1,0) and (-1,0) .. (-1,2);
\end{tikzpicture}
\;
\right)
,\quad
P_2 = \frac12 \left(
\;
\begin{tikzpicture}[baseline=-0.5ex,scale=0.2]
\draw (-1,-2) -- (-1,2);
\draw (1,-2) -- (1,2);
\end{tikzpicture}
-
\begin{tikzpicture}[baseline=-0.5ex,scale=0.2]
\draw (-1,-2) .. controls (-1,0) and (1,0) .. (1,-2);
\draw (1,2) .. controls (1,0) and (-1,0) .. (-1,2);
\end{tikzpicture}
+
\begin{tikzpicture}[baseline=-0.5ex,scale=0.2]
\draw (-1,-2) .. controls (-1,0) and (1,0) .. (1,2);
\draw (1,-2) .. controls (1,0) and (-1,0) .. (-1,2);
\end{tikzpicture}
\;
\right)
,
\]
\[
P_3 = 
\begin{tikzpicture}[baseline=-0.5ex,scale=0.05]
\node at (-5,9) {$P_2$};
\draw (-12,4) rectangle (2,14);
\node at (5,-9) {$P_2$};
\draw (-2,-14) rectangle (12,-4);
\draw (-10,-20) -- (-10,4);
\draw (-10,14) -- (-10,20);
\draw (0,-20) -- (0,-14);
\draw (0,-4) -- (0,4);
\draw (0,14) -- (0,20);
\draw (10,-20) -- (10,-14);
\draw (10,-4) -- (10,20);
\end{tikzpicture}
,
\quad
P_4 =
\begin{tikzpicture}[baseline=-0.5ex,scale=0.05]
\node at (-10,15) {$P_2$};
\draw (-17,10) rectangle (-3,20);
\node at (0,0) {$P_2$};
\draw (-7,-5) rectangle (7,5);
\node at (10,-15) {$P_2$};
\draw (3,-20) rectangle (17,-10);
\draw (-15,-25) -- (-15,10);
\draw (-15,20) -- (-15,25);
\draw (-5,-25) -- (-5,-5);
\draw (-5,5) -- (-5,10);
\draw (-5,20) -- (-5,25);
\draw (5,-25) -- (5,-20);
\draw (5,-10) -- (5,-5);
\draw (5,5) -- (5,25);
\draw (15,-25) -- (15,-20);
\draw (15,-10) -- (15,25);
\end{tikzpicture}
,
\qquad
\dots
.
\]
These are all self-dual, since they have an obvious rotational symmetry.

We make frequent use of the following relation.  We call it the leaf
relation, for reasons we will explain in the next section.

\begin{lemma}[The leaf relation]
\label{lem:leaf}
$P_{2'}$ satisfies the following relation.
\[
\begin{tikzpicture}[baseline=-0.5ex,scale=0.05]
\node at (-10,0) {$P_{2'}$};
\draw (-17,-5) rectangle (-3,5);
\draw (-15,-20) -- (-15,-5);
\draw (-15,5) -- (-15,20);
\draw (-5,-20) -- (-5,-5);
\draw (-5,5) -- (-5,20);
\draw (5,-20) -- (5,20);
\end{tikzpicture}
=2\;
\begin{tikzpicture}[baseline=-0.5ex,scale=0.05]
\node at (-5,10) {$P_{2'}$};
\draw (-12,5) rectangle (2,15);
\node at (-5,-10) {$P_{2'}$};
\draw (-12,-15) rectangle (2,-5);
\draw (-10,-20) -- (-10,-15);
\draw (-10,-5) -- (-10,5);
\draw (-10,15) -- (-10,20);
\draw (0,-20) -- (0,-15);
\draw (0,-5) .. controls (0,0) and (10,0) .. (10,-20);
\draw (0,5) .. controls (0,0) and (10,0) .. (10,20);
\draw (0,15) -- (0,20);
\end{tikzpicture}
\]
\end{lemma}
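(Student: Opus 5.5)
The plan is to expand both sides of Lemma~\ref{lem:leaf} into $R$-linear combinations of $3$-strand Brauer diagrams, and then show that their difference is a consequence of $A_3=0$. It will be a combination of $A_3$ and composites of $A_3$ with cup--cap diagrams. Write $s_1,s_2$ for the crossings of strands $1,2$ and $2,3$, and $e_2$ for the cup--cap on strands $2,3$; similarly $e_1$.

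\textbf{The two sides.} The left side is $P_{2'}\otimes\id_1=\frac12(1-s_1)$. The right side is $2\,gf$, where $f\colon 3\to 1$ applies $P_{2'}$ to strands $1,2$ and then caps strand $2$ with strand $3$, and $g\colon 1\to 3$ is the mirror image. Expanding each $P_{2'}=\frac12(1-s)$ gives $2\cdot\frac14=\frac12$ times a sum of four terms. Each term is a diagram $\pm\,x\,e_2\,y$ with $x,y\in\{1,s_1\}$. So
\[
2gf=\tfrac12\bigl(e_2 - s_1e_2 - e_2s_1 + s_1e_2s_1\bigr),
\]
and no closed loops appear. The identity to be proved therefore reads
\[
(1-s_1) = e_2 - s_1e_2 - e_2s_1 + s_1e_2s_1 .
\]
This relates the two permutation diagrams on the left to four diagrams, each containing one cap and one cup.

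\textbf{Using $A_3=0$.} The relation $A_3=0$ holds in $\overline{\mathcal B}$, and so does every composite $xA_3y$ with Brauer diagrams. The candidates are $A_3$ itself, which has the six permutation terms, and $A_3e_2$, $e_2A_3$, $A_3e_1$, $e_1A_3$. Using $s_iA_3=A_3s_i=-A_3$, the last four are the natural ones. For instance $A_3e_2$ is a signed sum of the six diagrams $\sigma e_2$ with $\sigma\in\Sym_3$, simplified using $s_2e_2=e_2$. This collapses to three distinct diagrams with coefficients $\pm2$.

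I will compute these explicitly. Then I will find scalars, necessarily in $\frac12\mathbb{Z}$, which is fine since $1/2\in R$, such that
\[
(1-s_1)-\bigl(e_2 - s_1e_2 - e_2s_1 + s_1e_2s_1\bigr)
\]
equals $\alpha A_3+\beta\,A_3e_2+\gamma\,e_2A_3+\cdots$. The permutation part of the difference is $1-s_1$, while $A_3$ also contains $s_2$, $s_1s_2$, $s_2s_1$ and $s_1s_2s_1$. So a multiple of $A_3$ alone cannot match it. Instead, rewrite the diagrams $s_1e_2s_1$, $s_1e_2$ and $e_2s_1$ as cup--caps involving strands $1$ and $3$ (e.g.\ $s_1e_2s_1$ is the cup--cap joining positions $1,3$). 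The comparison then becomes a finite linear system in the fifteen Brauer diagrams on three strands.

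\textbf{Main obstacle and sanity checks.} The main obstacle is the bookkeeping: finding the right combination of $A_3$ and its composites with $e_1,e_2$, and tracking signs when multiplying $e_2$ by permutations. If the naive candidates do not span the difference, I would next try $A_3$ conjugated by $s_1$, which is just $-A_3$ and so adds nothing, or partial rotations of $A_3$ using the pivotal structure. Before solving, I will check consistency in two ways. First, both sides have trace $\tr(P_{2'})\cdot 2=1\cdot 2=2$, with $\tr(P_{2'})=\frac12(4-2)=1$. Second, in the $O(2)$ picture $\Lambda^2V\otimes V\cong V$ is two-dimensional, matching the rank of $2gf$, which is an idempotent factoring through the object $1$. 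These checks make me confident the difference does lie in the ideal generated by $A_3$.
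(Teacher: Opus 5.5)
Your reduction is correct. With $e_2$ the cup--cap on strands $2,3$, the leaf relation is equivalent to $D=0$, where $D=(1-s_1)-(e_2-s_1e_2-e_2s_1+s_1e_2s_1)$. The gap is in how you propose to derive $D=0$ from $A_3=0$.

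Your candidates $A_3e_2$, $e_2A_3$, $A_3e_1$, $e_1A_3$ do not collapse to three diagrams with coefficients $\pm2$. They are identically zero in $\mathcal{B}$. Since $s_2e_2=e_2$, you get $A_3e_2=A_3s_2e_2=-A_3e_2$; the six terms $\sigma e_2$ cancel in pairs $\sigma,\sigma s_2$. More generally, every non-permutation Brauer diagram on three strands has the form $\pi e_2\pi'$ with $\pi,\pi'$ permutations. So the two-sided ideal generated by $A_3$ \emph{inside the algebra} $\mathcal{B}_3$ is just $RA_3$. Since $D$ contains cup--cap terms, your linear system has no solution. Your fallback of ``partial rotations'' points the right way, but it is not specified. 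The $180^\circ$ rotation of $A_3$ is $A_3$ again, and smaller rotations are not $3$-boxes, so you would still have to say what to compose them with.

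The missing idea is to work in the tensor ideal, using an operation on $3$-boxes that involves a crossing. Let $\cup\colon 0\to 2$ and $\cap\colon 2\to 0$ be the cup and cap, let $s$ be the crossing, and set
\[
\Phi(X)=(\id_2\otimes\cap\otimes\id_1)(\id_3\otimes s)(X\otimes\id_2)(\id_2\otimes\cup\otimes\id_1).
\]
In words, $\Phi$ adds a crossing between the top right and bottom right endpoints. On a Brauer diagram it just exchanges the labels of those two endpoints and creates no loops. It fixes $1$ and $s_1$, and sends $s_2$, $s_1s_2$, $s_2s_1$, $s_1s_2s_1$ to $e_2$, $s_1e_2$, $e_2s_1$, $s_1e_2s_1$ respectively. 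Hence $\Phi(A_3)=D$ exactly, with coefficient $1$. Since $\Phi$ is built from composition and tensor product, $D=0$ in $\overline{\mathcal{B}}$.

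This is the paper's argument, organized slightly differently. The paper writes $A_3=2(P_{2'}\otimes\id_1)-4(P_{2'}\otimes\id_1)s_2(P_{2'}\otimes\id_1)$, sets this to zero, and then adds that same crossing, simplifying with Reidemeister one on the left and Reidemeister two on the right.
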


\begin{proof}
The antisymmetrizer $A_3$ from Section \ref{sec:brauer} is equal to
the following.
$$
A_3 =
2\;
\begin{tikzpicture}[baseline=-0.5ex,scale=0.05]
\node at (-10,0) {$P_{2'}$};
\draw (-17,-5) rectangle (-3,5);
\draw (-15,-20) -- (-15,-5);
\draw (-15,5) -- (-15,20);
\draw (-5,-20) -- (-5,-5);
\draw (-5,5) -- (-5,20);
\draw (5,-20) -- (5,20);
\end{tikzpicture}
-4\;
\begin{tikzpicture}[baseline=-0.5ex,scale=0.05]
\node at (-5,10) {$P_{2'}$};
\draw (-12,5) rectangle (2,15);
\node at (-5,-10) {$P_{2'}$};
\draw (-12,-15) rectangle (2,-5);
\draw (-10,-20) -- (-10,-15);
\draw (-10,-5) -- (-10,5);
\draw (-10,15) -- (-10,20);
\draw (0,-20) -- (0,-15);
\draw (0,-5) .. controls (0,0) and (10,0) .. (10,20);
\draw (10,-20) .. controls (10,0) and (0,0) .. (0,5);
\draw (0,15) -- (0,20);
\end{tikzpicture}
$$
Set $A_3=0$ and divide by $2$.  Now add a crossing between the top right
and bottom right endpoints, and apply Reidemeister one on the left,
and Reidemeister two on the right.
\end{proof}

I claim that each $P_v$ is \emph{uncappable} in the following sense.
\[
\begin{tikzpicture}[baseline=-0.5ex,scale=0.05]
\node at (0,0) {$P_2$};
\draw (-7,-5) rectangle (7,5);
\draw (-5,-10) -- (-5,-5);
\draw (5,-10) -- (5,-5);
\draw (-5,5) .. controls (-5,15) and (5,15) .. (5,5);
\end{tikzpicture}
=
\begin{tikzpicture}[baseline=-0.5ex,scale=0.05]
\node at (0,0) {$P_{2'}$};
\draw (-7,-5) rectangle (7,5);
\draw (-5,-10) -- (-5,-5);
\draw (5,-10) -- (5,-5);
\draw (-5,5) .. controls (-5,15) and (5,15) .. (5,5);
\end{tikzpicture}
=0,\qquad
\begin{tikzpicture}[baseline=-0.5ex,scale=0.05]
\node at (0,0) {$P_3$};
\draw (-12,-5) rectangle (12,5);
\draw (-10,-10) -- (-10,-5);
\draw (0,-10) -- (0,-5);
\draw (10,-10) -- (10,-5);
\draw (-10,5) .. controls (-10,15) and (0,15) .. (0,5);
\draw (10,5) -- (10,15);
\end{tikzpicture}
=
\begin{tikzpicture}[baseline=-0.5ex,scale=0.05]
\node at (0,0) {$P_3$};
\draw (-12,-5) rectangle (12,5);
\draw (-10,-10) -- (-10,-5);
\draw (0,-10) -- (0,-5);
\draw (10,-10) -- (10,-5);
\draw (-10,5) -- (-10,15);
\draw (0,5) .. controls (0,15) and (10,15) .. (10,5);
\end{tikzpicture}
=0,\qquad\dots.
\]
This is a direct calculation for $P_2$, $P_{2'}$ and $P_3$.
If $v=4,5,\dots$, then any cap on the top of $P_v$ can slide down to
annihilate a copy of $P_3$ inside the definition of $P_v$.

Next we show that every $P_v$ can \emph{eat crossings}
in the following sense
\[
\begin{tikzpicture}[baseline=-0.5ex,scale=0.05]
\node at (0,0) {$P_2$};
\draw (-7,-5) rectangle (7,5);
\draw (-5,-10) -- (-5,-5);
\draw (-5,5) .. controls (-5,10) and (5,10) .. (5,15);
\draw (5,-10) -- (5,-5);
\draw (5,5) .. controls (5,10) and (-5,10) .. (-5,15);
\end{tikzpicture}
=
\begin{tikzpicture}[baseline=-0.5ex,scale=0.05]
\node at (0,0) {$P_2$};
\draw (-7,-5) rectangle (7,5);
\draw (-5,-10) -- (-5,-5);
\draw (-5,5) -- (-5,15);
\draw (5,-10) -- (5,-5);
\draw (5,5) -- (5,15);
\end{tikzpicture}
\;,\qquad
\begin{tikzpicture}[baseline=-0.5ex,scale=0.05]
\node at (0,0) {$P_3$};
\draw (-12,-5) rectangle (12,5);
\draw (-10,-10) -- (-10,-5);
\draw (0,-10) -- (0,-5);
\draw (10,-10) -- (10,-5);
\draw (-10,5) .. controls (-10,10) and (0,10) .. (0,15);
\draw (0,5) .. controls (0,10) and (-10,10) .. (-10,15);
\draw (10,5) -- (10,15);
\end{tikzpicture}
=
\begin{tikzpicture}[baseline=-0.5ex,scale=0.05]
\node at (0,0) {$P_3$};
\draw (-12,-5) rectangle (12,5);
\draw (-10,-10) -- (-10,-5);
\draw (0,-10) -- (0,-5);
\draw (10,-10) -- (10,-5);
\draw (-10,5) -- (-10,15);
\draw (0,5) .. controls (0,10) and (10,10) .. (10,15);
\draw (10,5) .. controls (10,10) and (0,10) .. (0,15);
\end{tikzpicture}
=
\begin{tikzpicture}[baseline=-0.5ex,scale=0.05]
\node at (0,0) {$P_3$};
\draw (-12,-5) rectangle (12,5);
\draw (-10,-10) -- (-10,-5);
\draw (0,-10) -- (0,-5);
\draw (10,-10) -- (10,-5);
\draw (-10,5) -- (-10,15);
\draw (0,5) -- (0,15);
\draw (10,5) -- (10,15);
\end{tikzpicture}
\;,\qquad\dots,
\]
or eat a crossing up to sign in the case of $P_{2'}$
\[
\begin{tikzpicture}[baseline=-0.5ex,scale=0.05]
\node at (0,0) {$P_{2'}$};
\draw (-7,-5) rectangle (7,5);
\draw (-5,-10) -- (-5,-5);
\draw (-5,5) .. controls (-5,10) and (5,10) .. (5,15);
\draw (5,-10) -- (5,-5);
\draw (5,5) .. controls (5,10) and (-5,10) .. (-5,15);
\end{tikzpicture}
=-\;
\begin{tikzpicture}[baseline=-0.5ex,scale=0.05]
\node at (0,0) {$P_{2'}$};
\draw (-7,-5) rectangle (7,5);
\draw (-5,-10) -- (-5,-5);
\draw (-5,5) -- (-5,15);
\draw (5,-10) -- (5,-5);
\draw (5,5) -- (5,15);
\end{tikzpicture}.
\]
This is a direct calculation for $P_2$ and $P_{2'}$.  For $v=3,4,\dots$,
we have that $P_v$ can eat a crossing at its top left, since its
definition has a copy of $P_2$ there.  It can eat a crossing at any
position on its top by the following lemma.

\begin{lemma}
If $Q$ is a morphism that is uncappable and can eat a crossing at its
top left, then it can also eat crossings at any position on its top.
\end{lemma}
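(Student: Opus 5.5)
The plan is to argue by induction on the position. Write $s_j$ for the virtual crossing between top strands $j$ and $j+1$ of $Q$, so that ``$Q$ eats a crossing at position $j$'' means $s_jQ=Q$. The hypothesis gives $s_1Q=Q$. For the inductive step, assume $s_iQ=Q$; I will show $s_{i+1}Q=Q$. This is equivalent to $(\id\otimes P_{2'}\otimes\id)\,Q=0$, where the copy of $P_{2'}$ sits on strands $i+1,i+2$, because $P_{2'}=\frac12(1-s)$.

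\textbf{Step 1 (mirror leaf relation).} First I need the left--right mirror image of Lemma~\ref{lem:leaf}. It states that $\id_1\otimes P_{2'}$ equals $2$ times $(\id_1\otimes P_{2'})$, followed by a cap--cup on strands $1,2$, followed by $(\id_1\otimes P_{2'})$. To get it, I would note that the defining relations of $\overline{\mathcal{B}}$ (bubble $=2$, Reidemeister moves, $A_3=0$) are invariant under reflection in a vertical axis, since $A_3$ is sent to itself. Alternatively, I would simply rerun the proof of Lemma~\ref{lem:leaf} with the picture reflected.

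\textbf{Step 2 (apply it on top of $Q$).} Next, place the mirror leaf relation on top strands $i,i+1,i+2$ of $Q$. The right-hand side has $(\id\otimes P_{2'})$ directly above $Q$, followed by a cap on strands $i,i+1$. Expand the lower $P_{2'}$ as $\frac12(1-s_{i+1})$.
\begin{itemize}
\item The term with $1$ gives a cap on strands $i,i+1$ of $Q$, which vanishes because $Q$ is uncappable.
\item The term with $s_{i+1}$ gives $\mathrm{cap}_{i,i+1}\,s_{i+1}$. As a Brauer diagram this joins top positions $i$ and $i+2$ and carries position $i+1$ to the left. So by the Reidemeister moves it equals $\mathrm{cap}_{i+1,i+2}\,s_i$.
\end{itemize}
Applied to $Q$, the second term becomes $\mathrm{cap}_{i+1,i+2}\,s_iQ=\mathrm{cap}_{i+1,i+2}\,Q$, using the inductive hypothesis. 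This is again $0$ by uncappability. Hence the whole right-hand side is zero, so $(\id\otimes P_{2'})Q=0$, i.e.\ $s_{i+1}Q=Q$. This completes the induction.

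\textbf{Main obstacle.} The only delicate point is the diagrammatic identity $\mathrm{cap}_{i,i+1}s_{i+1}=\mathrm{cap}_{i+1,i+2}s_i$, together with keeping track of which side the cap lies on in the mirrored leaf relation. Both are checks of which endpoints get connected, since Brauer diagrams are determined by their pairings. I would verify them by drawing the three strands explicitly. The inductive structure itself is routine.
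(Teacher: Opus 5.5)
Your proof is correct and is essentially the paper's argument. Your mirrored leaf relation is the same identity as the paper's $180$ degree rotation of Lemma~\ref{lem:leaf}, and you kill the two terms from expanding the lower $P_{2'}$ exactly as the paper does: one by a cap on strands $i,i+1$, the other by a cap on strands $i+1,i+2$ after $Q$ eats $s_i$. The paper writes out the $3$-box case and then inducts left to right, just as you do.
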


\begin{proof}
First assume $Q$ is a $3$-box.  Attach a copy of $P_{2'}$ to the top
right of $Q$.  Now apply a $180$ degree rotation of Lemma~\ref{lem:leaf}
to replace that $P_{2'}$ by a diagram that has two copies of $P_{2'}$.
Expand out the lower copy of $P_{2'}$ into two terms.  One of these
puts a cap on the left of $Q$, and the other puts a cap on the right of
$Q$ after $Q$ eats a crossing on its top left.  We conclude that attaching
$P_{2'}$ to the top left of $Q$ gives zero, and hence that $Q$ can eat
a crossing on its top right.

In general, we can use a similar argument to show that if $Q$ can eat a
crossing at a given position on its top, then it can also eat a crossing
at the next position to the right.  Work from left to right to show $Q$
can eat a crossing at any position on its top.
\end{proof}

Note that the above lemma would also hold if we are given that $Q$
can eat a crossing on its top right, or at any given position on its top.

In the Temperley-Lieb algebra, the Jones-Wenzl idempotents are
the unique uncappable elements, up to multiplication by a scalar.
In $\overline{\mathcal{B}}$, the idempotents $P_2,P_3,\dots$ have a
similar uniqueness result.

\begin{lemma}
\label{lem:uniqueuptoscalar}
If $Q$ is an uncappable $n$-box that can eat a crossing at its top left
then $Q$ is a scalar multiple of $P_n$.
\end{lemma}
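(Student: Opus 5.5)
Since $Q$ is uncappable and can eat a crossing at its top left, the preceding lemma shows that $Q$ can eat a crossing at every position on its top. We therefore treat $Q$ as an $n$-box that absorbs every crossing and every cap on its top, and write it as an $R$-linear combination of Brauer diagrams. The plan is to show that, under these absorption properties, $Q$ is determined by a single scalar. We then check that $P_n$ also has these properties, being uncappable and eating crossings by the discussion above, and has nonzero coefficient on the identity. Here $n\ge 2$, and $P_n$ means the idempotent in the sequence $P_2,P_3,\dots$; for $n=1$ the claim is trivial.

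First, $Q=P_nQ$. To see this, expand the top copy of $P_n$ in the product $P_nQ$. By the recursive definition, $P_n$ is built from copies of $P_2$, and $P_2=\frac12(\id-\cup\cap+X)$, where $\cup\cap$ is the cap-cup and $X$ the crossing. Expanding each $P_2$ in the stack from the top down, every term that places a cap directly on the top of $Q$ vanishes because $Q$ is uncappable. A crossing term can be absorbed into $Q$, since $Q$ eats crossings at every position. So each $P_2$ factor acts on $Q$ as $\frac12(\id+\id)=\id$, giving $P_nQ=Q$.

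The subtlety is that in $P_n$ for $n\ge 4$ the lower copies of $P_2$ do not sit directly on $Q$. I would handle this by induction: prove $P_kQ'=Q'$ for any uncappable, crossing-eating $k$-box $Q'$, peeling off the top copy of $P_2$ in the stack definition, which is adjacent to the rest. Alternatively, expand the copy of $P_2$ nearest to $Q$ first; that copy is the bottom of the stack, at the rightmost position. It acts as the identity on $Q$, and we then proceed upward.

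Second, we apply the same argument on the other side, reading $Q$ as a diagram. Symmetrically, we want to show that $P_nQ$ is a scalar multiple of $P_n$. The cleanest route is this. Write $Q=\sum_D c_D D$ over Brauer diagrams $D$. Every diagram is either a permutation, or has a cap at its top, which is then a cup at its bottom after factoring. More precisely, every non-permutation diagram factors as $D=D'\circ(\text{something with a cap})$, that is, $D$ has some arc joining two top points. Then $P_nD=P_n\sigma\,(\text{cup on top})\cdots$, where $\sigma$ is a permutation bringing the two top endpoints adjacent. Now $P_n$ eats $\sigma$, since permutations are products of adjacent crossings, and it kills the adjacent cap. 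Hence $P_nD=0$. For a permutation diagram, $P_nD=P_n$. Therefore $Q=P_nQ=\big(\sum_{\sigma} c_\sigma\big)P_n$.

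The main obstacle is the claim used in the second step that $P_n$ itself is uncappable and eats crossings at all positions as an $n$-box. This is asserted in the text above, via caps sliding down to annihilate a copy of $P_3$ and the preceding lemma, so I will simply invoke it. The first step, $Q=P_nQ$, then needs only the expansion argument for the stacked $P_2$'s. Actually, one can avoid the ordering issue in the first step by running the symmetric argument: expand $Q$ and use that $Q$ absorbs caps and crossings from $P_n$'s expansion into diagrams. Every non-permutation term of $P_n$ has a cup at its bottom, which becomes a cap on $Q$ after moving the cup through a permutation that $Q$ eats, and each permutation term acts as the identity on $Q$. This gives $P_nQ=\lambda Q$, where $\lambda$ is the sum of the permutation coefficients of $P_n$. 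Combining the two computations yields $\lambda Q=(\sum_\sigma c_\sigma)P_n$, and it remains to check that $\lambda$ is invertible; taking $Q=P_n$ in the formula gives $\lambda P_n=P_n$, which shows $\lambda=1$ since $P_n\neq0$.
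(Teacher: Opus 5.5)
Your main argument is correct and is the paper's proof. You get $P_nQ=Q$ by letting $Q$ eat the copies of $P_2$ in the staircase starting from the bottom one, which is the right way to handle the ordering issue you noticed. You get $P_nQ=(\sum_\sigma c_\sigma)P_n$ by expanding $Q$ into Brauer diagrams and letting $P_n$ eat crossings and kill cups from below.

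The optional ``symmetric'' variant in your last paragraph does not close as written, however. Substituting $Q=P_n$ into $\lambda Q=(\sum_\sigma c_\sigma)P_n$ only gives the tautology $\lambda P_n=\lambda P_n$, because then $\sum_\sigma c_\sigma=\lambda$. To get $\lambda P_n=P_n$ you need $P_n^2=P_n$, which is exactly the first step you were trying to bypass. Moreover, $(\lambda-1)P_n=0$ yields $\lambda=1$ only if $P_n$ is not a torsion element, not merely $P_n\neq 0$, since $R$ may have zero divisors. The paper checks this separately via $\tr(P_n)=2$.

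If you want that route, compute $\lambda$ directly from the defining expression of $P_n$ in $\mathcal{B}_n$. The sum of the coefficients of permutation diagrams is an algebra homomorphism $\mathcal{B}_n\to R$, because a product of two $n$-box diagrams is a permutation, with no loops, if and only if both factors are. This homomorphism takes the value $\frac12(1+1)=1$ on each factor involving $P_2$. So $\lambda=1$, and $P_nQ=Q$ follows immediately.
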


\begin{proof}
Consider $P_nQ$.  On the one hand, $Q$ is uncappable and eats crossings,
so it can eat every copy of $P_2$ in the definition of $P_n$.
Thus $P_nQ=Q$.

On the other hand, by its rotational symmetry, $P_n$ is uncuppable
and eats crossings from below.  Thus we could expand $Q$ into a linear
combination of Brauer diagrams, let $P_n$ eat crossings and annihilate
any terms with caps, and reduce $P_nQ$ to a scalar multiple of $P_n$.
\end{proof}

We consider $P_0$ and $P_1$ to be vacuously uncappable, since there
is no room for a cap.  By rotational symmetry, each $P_v$ is also
\emph{uncappable}, and can eat crossings from below, up to sign in the
case of $P_{2'}$.

\begin{proof}[Proof of Theorem \ref{thm:principal}]
We must show that each $P_v$ is in fact an idempotent.  In $P_v^2$, expand
out the top copy of $P_v$, and use the fact that the bottom copy of $P_v$
is uncappable and eats crossings.  The result is $P_v$ in every case.

We also have to check that $P_v$ is non-zero and not a torsion element.
An easy way to do this is to calculate the trace of $P_v$.  This is $1$
for $P_0$ and $P_{2'}$, and $2$ for $P_1,P_2,P_3,\dots$.

Now we show that every morphism of the form $P_vfP_v$ is a scalar
multiple of $P_v$.  We can assume $f$ is a single Brauer diagram.
Let each $P_v$ eat crossings.  Either we reduce to a diagram with a cup
or cap, or the identity diagram.  The result is zero, or $P_v^2=P_v$,
or $-P_v$ if $v=2'$ and $f$ was a single crossing.

A similar argument can be used to show that every morphism of the form
$P_wfP_v$ is zero when $v \neq w$.  Eat crossings and reduce to a diagram
with a cup or cap, or the identity diagram.  A cup or cap gives zero.
The identity diagram can only happen when $P_v$ and $P_w$ are in the
same $n$-box space, in which case we have
$$P_2 P_{2'} = P_{2'} P_2 = 0.$$

It remains to show that show that, for all $v$, $P_v \otimes \id_1$
is isomorphic to the direct sum of $P_w$ over all $w$ adjacent to $v$.

For $v=0$, we have $P_0 \otimes \id_1 = P_1$.
For $v=1$, we have $P_1 \otimes \mathrm{id}_1 = P_2 + P_{2'} + E$,
where $E$ is the Jones idempotent
$$
E=
\frac12\;
\begin{tikzpicture}[baseline=-0.5ex,scale=0.05]
\draw (-5,-5) .. controls (-5,0) and (5,0) .. (5,-5);
\draw (-5,5) .. controls (-5,0) and (5,0) .. (5,5);
\end{tikzpicture}
\simeq
\frac12\;
\begin{tikzpicture}[baseline=-0.5ex,scale=0.05]
\draw (-5,0) .. controls (-5,5) and (5,5) .. (5,0)
             .. controls (5,-5) and (-5,-5) .. (-5,0);
\end{tikzpicture}
= P_0.
$$
For $v=2'$, the leaf relation implies
\[
P_{2'} \otimes \mathrm{id}_1
=2\;
\begin{tikzpicture}[baseline=-0.5ex,scale=0.05]
\node at (-5,10) {$P_{2'}$};
\draw (-12,5) rectangle (2,15);
\node at (-5,-10) {$P_{2'}$};
\draw (-12,-15) rectangle (2,-5);
\draw (-10,-20) -- (-10,-15);
\draw (-10,-5) -- (-10,5);
\draw (-10,15) -- (-10,20);
\draw (0,-20) -- (0,-15);
\draw (0,-5) .. controls (0,0) and (10,0) .. (10,-20);
\draw (0,5) .. controls (0,0) and (10,0) .. (10,20);
\draw (0,15) -- (0,20);
\end{tikzpicture}
\simeq
2\;
\begin{tikzpicture}[baseline=-0.5ex,scale=0.05]
\node at (-5,10) {$P_{2'}$};
\draw (-12,5) rectangle (2,15);
\node at (-5,-10) {$P_{2'}$};
\draw (-12,-15) rectangle (2,-5);
\draw (-10,-20) -- (-10,-15);
\draw (-10,-5) -- (-10,5);
\draw (-10,15) -- (-10,20);
\draw (0,-15) .. controls (0,-20) and (10,-20) .. (10,0)
              .. controls (10,20) and (0,20) .. (0,15);
\draw (0,-5) -- (0,5);
\end{tikzpicture}
=P_1.
\]
Finally, for $v=2,3,\dots$, I claim we have
$$P_v \otimes \mathrm{id}_1 = P_{v+1} + F,$$
where $F$ is as follows.
\[
F=
\begin{tikzpicture}[baseline=-0.5ex,scale=0.05]
\node at (-10,10) {$P_v$};
\draw (-22,5) rectangle (2,15);
\node at (-10,-10) {$P_v$};
\draw (-22,-15) rectangle (2,-5);
\draw (-20,-5) -- (-20,5);
\node at (-12,0) {$\dots$};
\draw (-4,-5) -- (-4,5);
\draw (0,-5) .. controls (0,0) and (10,0) .. (10,-20);
\draw (0,5) .. controls (0,0) and (10,0) .. (10,20);
\draw (-20,-20) -- (-20,-15);
\node at (-10,-18) {$\dots$};
\draw (0,-20) -- (0,-15);
\draw (-20,15) -- (-20,20);
\node at (-10,18) {$\dots$};
\draw (0,15) -- (0,20);
\end{tikzpicture}
\simeq
\begin{tikzpicture}[baseline=-0.5ex,scale=0.05]
\node at (-10,10) {$P_v$};
\draw (-22,5) rectangle (2,15);
\node at (-10,-10) {$P_v$};
\draw (-22,-15) rectangle (2,-5);
\draw (-20,-5) -- (-20,5);
\node at (-10,0) {$\dots$};
\draw (0,-5) -- (0,5);
\draw (0,-15) .. controls (0,-20) and (10,-20) .. (10,0)
              .. controls (10,20) and (0,20) .. (0,15);
\draw (-20,-20) -- (-20,-15);
\node at (-12,-18) {$\dots$};
\draw (-4,-20) -- (-4,-15);
\draw (-20,15) -- (-20,20);
\node at (-12,18) {$\dots$};
\draw (-4,15) -- (-4,20);
\end{tikzpicture}
= P_{v-1}.
\]
Here, $P_v\otimes\id_1-F=P_{v+1}$ holds up to a scalar by
Lemma~\ref{lem:uniqueuptoscalar}.  An easy way to check that the scalar
is $1$ is to calculate traces, which are $2$ for $P_{v+1}$ and $P_v$,
and $4$ for $P_v\otimes\id_1$.
\end{proof}

\section{A converse}

Suppose $\mathcal{C}$ is an $R$-linear spherical category whose objects
are the natural numbers.  Suppose the principal graph of $\mathcal{C}$
is $D_\infty$, in the sense of Definition~\ref{def:principal}.

Further suppose $\mathcal{C}$ has bubble constant $2$, meaning
$$\tr(\id_1)=2\;\id_0.$$
This might follow automatically from the other assumptions.  Indeed,
there is a unique hyperfinite subfactor with principal graph $D_\infty$,
by Popa's result that amenable subfactors are classified by their standard
invariant \cite{MR1278111}.  However nobody has looked into this question
in our context where the planar algebras have no shading and fewer axioms,
and our ring of scalars need not be $\mathbb{C}$.

\begin{theorem}
If $\mathcal{C}$ satisfies the above assumptions then it is isomorphic
to $\overline{\mathcal{B}}$.
\end{theorem}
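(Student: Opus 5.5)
The plan is to construct a virtual crossing in $\mathcal{C}$ from the bricks attached to $D_\infty$, check that it satisfies the defining relations of $\overline{\mathcal{B}}$ from Section~\ref{sec:brauer}, and then show that the induced functor $\overline{\mathcal{B}}\to\mathcal{C}$ is an isomorphism on every $\Hom$ space.

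\textbf{Step 1: decomposing $\id_2$.} Since the base brick is $P_0=\id_0$, we get $P_1\simeq P_0\otimes\id_1=\id_1$, so $P_1=\id_1$ up to isomorphism. The graph then gives a decomposition $\id_2=P_2+P_{2'}+E$ into pairwise orthogonal bricks. The bubble constant $2$ lets us identify $E$: the idempotent $\frac12\cup\!\cap$ is isomorphic to $\id_0=P_0$, and it is orthogonal to $P_2$ and $P_{2'}$ because those are not isomorphic to $P_0$. So $E=\frac12\cup\!\cap$, and $\id_2-E=P_2+P_{2'}$.

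\textbf{Step 2: defining the crossing.} Set $X=P_2-P_{2'}$. Then $X^2=P_2+P_{2'}=\id_2-E$. This is wrong, because a Brauer crossing squares to $\id_2$. The fix is to use $X=P_2-P_{2'}+E$, which gives $X^2=\id_2$; this matches $P_2=\frac12(\id-\cup\!\cap+X)$.

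\textbf{Step 3: rotational symmetry.} This step must come before Reidemeister one and three can be checked. Rotating $P_2$ or $P_{2'}$ by $90^\circ$ gives an endomorphism of $2$. I will decompose it in the basis $P_2,P_{2'},E$, plus possibly additional morphisms $P_2\to P_{2'}$ if they exist; they do not, since $\Hom(P_2,P_{2'})=0$. So $\Hom(2,2)$ is free of rank $3$ on $P_2,P_{2'},E$.

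The coefficients of the rotated elements are fixed by traces and by capping. For example, $\tr(P_{2'})=\tr(P_2)-\dots$; by the principal graph and Frobenius reciprocity, the dimensions satisfy $d_0=1$, $d_1=2$, $d_{2'}+d_2+1=4$, and $2d_{2'}=d_1$. This gives $d_{2'}=1$ and $d_2=2$.

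From this I will deduce that rotation by $90^\circ$ preserves $X$. The rotation has order $4$ on this $3$-dimensional space and fixes $\id_2+\cup\!\cap$ structure. Computing it should show that the rotation swaps $\id_2$ and $\cup\!\cap$ and fixes $X$ (or sends it to $-X$, which is excluded by comparing traces).

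\textbf{Step 4: Reidemeister moves.}
\begin{itemize}
\item Reidemeister two is $X^2=\id_2$, which holds by Step 2.
\item Reidemeister one says that capping $X$ gives the identity strand. This follows from uncappability of $P_2$ and $P_{2'}$, together with the fact that capping $E$ gives $\id_1$.
\item Reidemeister three is the main obstacle. It lives in $\Hom(3,3)$, which decomposes according to two-step paths in the graph. The paths from $1$ give multiplicities: $P_1$ appears with multiplicity $3$, $P_3$ with multiplicity $1$, and $P_{2'}\otimes\id_1\simeq P_1$. So $\Hom(3,3)$ has rank $9+1=10$.
\end{itemize}
The plan for Reidemeister three is to show that the six permutation diagrams built from $X$ satisfy the relations of the group algebra of $\Sym_3$. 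Write $Y=XX'X-X'XX'$, where $X$ and $X'$ act on strands $(1,2)$ and $(2,3)$. I will show that every cap applied to $Y$ vanishes, using Reidemeister one and two together with the leaf-type relation obtained from $P_{2'}\otimes\id_1\simeq P_1$. Then $Y$ is an uncappable morphism in $\Hom(3,3)$, so it is supported on $P_3$, where it equals a scalar times $P_3$. The scalar can then be computed via the trace, or by multiplying by $P_2\otimes\id_1$, on which both sides act the same way.

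\textbf{Step 5: $A_3=0$.} The antisymmetrizer $A_3$ is uncappable and has trace $\tr(A_3)=0$ at $\delta=2$. Since the uncappable part of $\Hom(3,3)$ is spanned by $P_3$ and $\tr(P_3)=2$, it follows that $A_3=0$ (dividing by $2$, which is allowed in $R$).

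\textbf{Step 6: the functor is an isomorphism.} This gives a functor $\overline{\mathcal{B}}\to\mathcal{C}$. To see that it is an isomorphism, I will compare ranks: both categories have principal graph $D_\infty$ by Theorem~\ref{thm:principal}, so $\Hom(n,n)$ in each has rank equal to the number of loops of length $2n$ at the base vertex. Surjectivity follows because the bricks $P_v$ of $\mathcal{C}$ are matrix units built from $X$. Finally, surjectivity between free modules of equal finite rank implies bijectivity.
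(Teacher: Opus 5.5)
\textbf{Gap: Step~3 does not determine $\rho(X)$.} Traces, capping and the order of the rotation are not enough to fix $\rho(X)$. Write $\rho(X)=\alpha E+\beta P_2+\gamma P_{2'}$.
\begin{itemize}
\item Capping gives $\alpha=1$, because a cap on $\rho(X)$ is a bent partial trace of $X$, which is $\id_1$.
\item The trace of $\rho(X)$ is the cup--cap closure of $X$, so it gives $\alpha+2\beta+\gamma=2$.
\end{itemize}
That is two linear equations in three unknowns. The rotation adds only that $\rho^2$ is the $180^\circ$ rotation. This is the identity on $2$-boxes, because $E$, $P_2$ and $P_{2'}$ are self-dual. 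Imposing $\rho^2(X)=X$ on the one-parameter family then gives $(3\beta+1)(\beta-1)=0$.

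So whenever $\frac13\in R$ (for example over $\mathbb{C}$), there is a second solution $\rho(X)=E-\frac13P_2+\frac53P_{2'}$. It passes every trace, capping and order test you propose, yet its square is $E+\frac19P_2+\frac{25}9P_{2'}\neq\id_2$. The competitor is not $-X$, which capping already excludes, so comparing traces cannot finish the argument.

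\textbf{Missing ingredient: the leaf relation.} The leaf relation of Lemma~\ref{lem:leaf} holds in $\mathcal{C}$ because $P_{2'}\otimes\id_1\simeq P_1$, so its endomorphism module is free of rank one; the scalar $2$ is fixed by traces. Its left partial trace gives $\rho(P_{2'})^2=\frac14\id_2$. Together with $E\rho(P_{2'})=\rho(P_{2'})E=\frac12E$, this yields $\rho(X)^2=\id_2$, i.e.\ $\beta^2=\gamma^2=1$. That forces $\beta=1$, $\gamma=-1$, and so $\rho(X)=X$. This is exactly how the paper proves rotational invariance.

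\textbf{Steps 4--6.} You invoke the leaf relation in Step~4, where it is not needed. Once $\rho(X)=X$, caps slide through crossings, and Reidemeister one and two alone show that $Y$ is killed by every morphism $3\to1$, on top and, by symmetry, on the bottom.

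Killing only the two planar caps is not enough to land in $P_3$, however. $\Hom(3,1)$ is free of rank $3$, so the $3$-boxes killed by planar caps on top and bottom form a rank-$2$ module: $P_3$ plus one copy of $P_1$. You must also kill the crossed cap joining strands $1$ and $3$. This holds for $Y$ by sliding, and for $A_3$ by antisymmetry once Reidemeister three is known.

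With that correction, your order is a valid alternative to the paper's. You get Reidemeister three from $Y=\lambda Q$ with $Q\simeq P_3$, $\tr(Y)=0$ and $\tr(Q)=2$, and then $A_3=0$ the same way. The paper instead obtains $A_3=0$ by running the proof of Lemma~\ref{lem:leaf} backwards, and deduces Reidemeister three from $A_3-A_3^\dagger=0$.

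Your rank count in Step~6 likewise replaces the paper's faithfulness argument via the absence of nonzero negligible morphisms. Surjectivity still needs the point that the brick decompositions of each $\id_n$, and the connecting isomorphisms, can be built inside the image of $\phi$.

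A minor point on Reidemeister one: capping $E$ gives a cap, not $\id_1$. The kink form of that move uses the partial traces $\id_1$ of $P_2$ and $\frac12\id_1$ of $P_{2'}$, which come from the dimensions, not from uncappability.
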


\begin{proof}
We have an idempotent $P_v$ corresponding to each vertex $v$ in
$D_\infty$.  These are really isomorphism classes, and we can choose a
representative $P_v$ that is an $n$-box, where $n$ is the distance from
$v$ to the base vertex in $\Gamma$.  We are abusing notation, since $P_v$
were idempotents in $\overline{\mathcal{B}}$ in the previous section.

Let our functor $\phi$ take the object $n$ in $\overline{\mathcal{B}}$
to the object $n$ in $\mathcal{C}$, and a crossing in
$\overline{\mathcal{B}}$ to $\id_2-2P_{2'}$ in $\mathcal{C}$.
$$
\phi \colon
\begin{tikzpicture}[baseline=-0.5ex,scale=0.05]
\draw (-5,-10) .. controls (-5,0) and (5,0) .. (5,10);
\draw (5,-10) .. controls (5,0) and (-5,0) .. (-5,10);
\end{tikzpicture}
\;\mapsto\;
\id_2-2P_{2'}.
$$
This rearranges the definition from the previous section of $P_{2'}$
in terms of a crossing to instead define a crossing in terms of $P_{2'}$.

We also insist that $\phi$ is a strong monoidal functor, and preserves
the pivotal structure.  Concretely, this means $\phi$ takes a Brauer
diagram with $k$ crossings to a linear combination of $2^k$ diagrams in
$\mathcal{C}$ that are made of strands and copies of $P_{2'}$.  Here,
the axioms of a spherical category guarantee that we can represent
morphisms in $\mathcal{C}$ by diagrams in a rectangle.

We must show that $\phi$ is well defined.  Obviously it repects the
``bubble equals $2$'' relation.  It remains to show that if we define
a crossing in $\mathcal{C}$ as
$$T = \id_2 - 2 P_{2'},$$
then it is invariant under $90$ degree rotation, satisfies Reidemeister
moves, and gives $A_3=0$.

We have $P_0=\id_0$, which is represented by the empty diagram, and $P_1$
must be $\id_1$, which is represented by a single vertical strand. Next,
$\id_2$ is a direct sum
$$\id_2 = P_1\otimes\id_1 = E + P_2 + P_{2'},$$
where $E\simeq P_0$.  Since $E$ is the unique $2$-box that is isomorphic
to $\id_0$, it must be the Jones idempotent
$$E=
\frac12\;
\begin{tikzpicture}[baseline=-0.5ex,scale=0.05]
\draw (-5,-5) .. controls (-5,0) and (5,0) .. (5,-5);
\draw (-5,5) .. controls (-5,0) and (5,0) .. (5,5);
\end{tikzpicture}
$$
The $2$-box space is freely generated by $E$, $P_2$, and $P_{2'}$.

We list some relations that $P_{2'}$ must satisfy.
\[
EP_{2'}=0
,\qquad
\tr(P_{2'})=1
,\qquad
P_{2'}^\dagger=P_{2'}.
\]
The first came from the definition of direct sum, and implies
that $P_{2'}$ is uncappable.  The trace of $P_{2'}$ is $1$ because
$P_{2'}\otimes\id_1$ is isomorphic to $P_1$, which has trace $2$.
The $180$ degree rotation of $P_{2'}$ is also an uncappable idempotent
$2$-box with trace $1$, so it must equal $P_{2'}$, since no other linear
combination of $E$, $P_2$, and $P_{2'}$ has these properties.

Importantly, $P_{2'}$ also satisfies the leaf relation from
$\overline{\mathcal{B}}$, as shown in Lemma~\ref{lem:leaf}.  To see
this, note that $P_{2'}\otimes\id_1$ is isomorphic to $P_1$, so its
space of endomorphisms is free of rank one.  Thus the two sides of the
leaf relation are equal up to scalars.  To see that the scalar $2$ in
the relation is correct, we could calculate the trace of both sides,
which is $2$.  A similar relation holds any time we have an idempotent
corresponding to a vertex of valence one, or ``leaf'' of a principal
graph, hence the name ``leaf relation''.

The \emph{right partial trace} of an $n$-box is obtained by attaching a
strand that connects the top right and bottom right endpoints.  The right
partial trace of $P_{2'}$ must be $\frac12\id_1$, since that is the
unique $1$-box with trace $1$.  Similarly for the left partial trace.
The left partial trace of the leaf relation implies
$$\rho(P_{2'})^2=\frac14\id_2.$$

We now have enough relations in $P_{2'}$ to prove the necessary relation
in $T$.  We have the following versions of Reidemeister one and two.
$$ET=T,\qquad T^2=\id_2.$$
Let $\rho$ denote the $90$ degree rotation operator on $2$-boxes,
so $\rho(T)$ is as follows.
\[
\rho(T) = 
\begin{tikzpicture}[baseline=-0.5ex,scale=0.05]
\node at (0,0) {$T$};
\draw (-7,-5) rectangle (7,5);
\draw (-5,-10) -- (-5,-5);
\draw (5,5) -- (5,10);
\draw (5,-5) .. controls (5,-10) and (15,-10) .. (15,10);
\draw (-5,5) .. controls (-5,10) and (-15,10) .. (-15,-10);
\end{tikzpicture}.
\]
This satisfies the same versions of Reidemeister one and two
$$E\rho(T)=E,\qquad \rho(T)^2=\id_2.$$

Let $\alpha, \beta, \gamma \in R$  be the coefficients of $\rho(T)$
in terms of our generating idempotents
$$\rho(T) = \alpha E + \beta P_2 + \gamma P_{2'}.$$
Plug this into the three relations
$$E\rho(T)=E
,\qquad
\tr(\rho(T))=2
,\qquad
(\rho(T))^2=\id_2,
$$
to get the equations
$$
\alpha=1
,\qquad
\alpha+2\beta+\gamma=2
,\qquad
\alpha^2=\beta^2=\gamma^2=1.
$$
This system of equations is easy to solve, even if $R$ can have zero
divisors.  We get the unique solution
$$
\rho(T) = E + P_2 - P_{2'} = \id_2 - 2P_{2'} = T.
$$
It is now reasonably safe to represent $T$ diagrammatically as a virtual
crossing, although we cannot yet apply Reidemeister three.

Next we prove $A_3=0$.  The idea is to follow the proof of
Lemma~\ref{lem:leaf} in reverse, this time showing that the leaf
relation implies $A_3=0$.  Take the definiton of $A_3$ exactly as drawn
in Section~\ref{sec:brauer}.  Expand every crossing between the left
two strands to $\id_2-2P_{2'}$, leaving crossings between the right
two strands untouched.  Everything cancels except the expression for
$A_3$ as shown in the proof of Lemma~\ref{lem:leaf}.  This in turn is
equivalent to the leaf relation by adding a crossing between the top
right and bottom left endpoints, and using only Reidemeister one and two.

Now $A_3$ and its dual are both zero, and therefore so is their difference
\[
A_3 - A_3^\dagger =
\begin{tikzpicture}[baseline=-0.5ex,scale=0.15]
\draw (2,-2) .. controls (2,-1) and (-2,-1) .. (-2,2);
\draw (0,-2) .. controls (0,-1) and (1,-1) ..
      (1,0) .. controls (1,1) and (0,1) ..  (0,2);
\draw (-2,-2) .. controls (-2,1) and (2,1) .. (2,2);
\end{tikzpicture}
\;-\;
\begin{tikzpicture}[baseline=-0.5ex,scale=0.15]
\draw (-2,-2) .. controls (-2,-1) and (2,-1) .. (2,2);
\draw (0,-2) .. controls (0,-1) and (-1,-1) ..
      (-1,0) .. controls (-1,1) and (0,1) ..  (0,2);
\draw (2,-2) .. controls (2,1) and (-2,1) .. (-2,2);
\end{tikzpicture}
.
\]
Thus Reidemeister three holds for $T$ in $\mathcal{C}$.

We conclude that $\phi$ is a well-defined functor from
$\overline{\mathcal{B}}$ to $\mathcal{C}$.  It is the obvious bijection on
objects, and the obvious isomorphism on the $0$-box spaces.  Recall that
$\overline{\mathcal{B}}$ and $\mathcal{C}$ both have principal graph
$D_\infty$, and the traces of the idempotents $P_v$ are all units of $R$.
The fact that $\phi$ is an isomorphism now follows from general algebraic
nonsense.

I claim that $\overline{\mathcal{B}}$ has no non-zero negligible
morphisms.  By induction, every $\id_n$ is isomorphic to a direct sum of
idempotents $P_v$.  Any morphism breaks down as a matrix of morphisms
between idempotents that are isomorphic to $P_v$.  A non-zero morphism
must have a non-zero matrix entry.  Thus we can compose by an inclusion
and a projection to get a non-zero isomorphism between idempotents that
are isomorphic to some $P_v$.  Such an isomorphism has non-zero trace.
It follows that $\phi$ is faithful.

Now consider a morphism in $\mathcal{C}$.
It also breaks down as a matrix of morphisms
between idempotents that are isomorphic to bricks $P_v$.
The relevant isomorphisms are all in the image of $\phi$.
The automorphisms of $P_v$ are all in the image of $\phi$.
It follows that $\phi$ is full.
\end{proof}

Note that most of the above proof only used facts about the $2$-box
space, together with the leaf relation in the $3$-box space.  Thus it
shows that if $\mathcal{C}$ has principal graph $D^{(1)}_n$ for $n\ge5$
then it contains a copy of $\overline{\mathcal{B}}$.  This also holds
for $D^{(1)}_4$ when working over $\mathbb{C}$, but I have not looked
into this over $R$.

\section{Embeddings into other planar algebras}

We finish by noting some embeddings of embeddings of
$\overline{\mathcal{B}}$ into other planar algebras.  We will work
over $\mathbb{C}$.  This might not be strictly necessary, but it saves
some work.  In particular, our embeddings automatically map negligible
morphisms to negligible morphisms, since morphisms are negligible if
and only if they are null vectors for the usual sesquilinear form.

\subsection{Affine $A_\infty$}

Define a planar algebra whose diagrams are Temperley-Lieb diagrams
except that a strand can have one or more tags.  A tag is drawn as a
short edge attached to one side of a strand.  Two adjacent tags on the
same side of a strand can cancel, a bubble with no tags evaluates to $2$,
and a bubble with one tag evaluates to $0$.  The final relation says we
can switch a tag to the opposite side of its strand, and multiply the
diagram by $\omega$, where $\omega = \pm 1$.  As usual, we quotient by
negligible morphisms.

The two values of $\omega$ give us the two planar algebras with principal
graph $\tilde{A}_\infty$.  Molander recently gave an equivalent description
of these planar algebras \cite{molander2024skeintheoryaffinesubfactor}.

The fact that the $D_\infty$ subfactor planar algebra lies inside the
$\tilde{A}_\infty$ subfactor planar algebra is well known to experts, at
least in the subfactor context.  A proof appears in Molander's PhD thesis.
An explicit embedding is as follows.
$$
\begin{tikzpicture}[baseline=-0.5ex,scale=0.05]
\draw (-5,-10) .. controls (-5,0) and (5,0) .. (5,10);
\draw (5,-10) .. controls (5,0) and (-5,0) .. (-5,10);
\end{tikzpicture}
\;\mapsto\;
\begin{tikzpicture}[baseline=-0.5ex,scale=0.05]
\draw (-5,-10) .. controls (-5,0) and (5,0) .. (5,-10);
\draw (-5,10) .. controls (-5,0) and (5,0) .. (5,10);
\end{tikzpicture}
\;-\;
\begin{tikzpicture}[baseline=-0.5ex,scale=0.05]
\draw (-5,-10) -- (-5,10);
\draw (5,-10) -- (5,10);
\draw (-5,0) -- (-3,0);
\draw (5,0) -- (3,0);
\end{tikzpicture}
$$

\subsection{Bubbles and confetti}

Define the \emph{bubbles and confetti} planar algebra, whose diagrams
are Temperley-Lieb diagrams except that a strand can end at a univalent
vertex.  A bubble evaluates to two.  A piece of ``confetti'', consisting
of two univalent vertices joined by a strand, evaluates to one.  As usual,
we quotient by negligible morphisms.


This has principal graph consisting of two vertices, an edge between them,
and two loops such that each vertex is adjacent to itself.  The planar
algebra with principal graph $\tilde{A}_\infty$ and $\omega=1$ embeds
into the bubbles and confetti planar algebra as follows.
$$
\begin{tikzpicture}[baseline=-0.5ex,scale=0.05]
\draw (0,-10) -- (0,10);
\draw (0,0) -- (2,0);
\end{tikzpicture}
\;\mapsto\;
\begin{tikzpicture}[baseline=-0.5ex,scale=0.05]
\draw (0,-10) -- (0,10);
\end{tikzpicture}
\;-2\;
\begin{tikzpicture}[baseline=-0.5ex,scale=0.05]
\draw (0,-10) -- (0,-5);
\node at (0,-5) {$\bullet$};
\draw (0,10) -- (0,5);
\node at (0,5) {$\bullet$};
\end{tikzpicture}
$$

\subsection{Principal graph $A_3$}

Consider the Temperley-Lieb algebra with bubble constant $\sqrt{2}$,
modulo negligible morphisms. This has principal graph $A_3$ (not to be
confused with the antisymmetrizer $A_3$ in $\mathcal{B}$).

The following is an embedding from the bubbles and confetti planar
algebra into the $A_3$ planar algebra.
$$
\begin{tikzpicture}[baseline=-0.5ex,scale=0.05]
\draw (0,-5) -- (0,5);
\node at (0,5) {$\bullet$};
\end{tikzpicture}
\mapsto
2^{-\frac{1}{4}}\;
\begin{tikzpicture}[baseline=-0.5ex,scale=0.05]
\draw (-2,-5) .. controls (-2,6) .. (0,6) .. controls (2,6) .. (2,-5);
\end{tikzpicture}
$$
This is a slightly more general notion of ``embedding'' since it doubles
the strands.  To be precise, we could think of it as a functor that acts
as mutiplication by $2$ on objects.

\subsection{Embeddings into its own Cauchy completion}

Suppose $k \ge 1$.  There is a functor from $\overline{\mathcal{B}}$ to
its own Cauchy completion that maps $1$ to the image of $P_k$.  A crossing
the diagram where $k$ parallel strands cross another $k$ parallel strands,
with domain and codomain restricted to $P_k \otimes P_k$.


\begin{thebibliography}{1}

\bibitem{MR1437496}
Dietmar Bisch and Vaughan Jones.
\newblock Algebras associated to intermediate subfactors.
\newblock {\em Invent. Math.}, 128(1):89--157, 1997.

\bibitem{MR1733737}
Dietmar Bisch and Vaughan Jones.
\newblock Singly generated planar algebras of small dimension.
\newblock {\em Duke Math. J.}, 101(1):41--75, 2000.

\bibitem{MR4374438}
V.~F.~R. Jones.
\newblock Planar algebras, {I}.
\newblock {\em New Zealand J. Math.}, 52:1--107, 2021 [2021--2022].

\bibitem{MR3420509}
G.~I. Lehrer and R.~B. Zhang.
\newblock The {B}rauer category and invariant theory.
\newblock {\em J. Eur. Math. Soc. (JEMS)}, 17(9):2311--2351, 2015.

\bibitem{molander2024skeintheoryaffinesubfactor}
Melody Molander.
\newblock Skein theory for affine a subfactor planar algebras, 2024.

\bibitem{MR2559686}
Scott Morrison, Emily Peters, and Noah Snyder.
\newblock Skein theory for the {$D_{2n}$} planar algebras.
\newblock {\em J. Pure Appl. Algebra}, 214(2):117--139, 2010.

\bibitem{MR1278111}
Sorin Popa.
\newblock Classification of amenable subfactors of type {II}.
\newblock {\em Acta Math.}, 172(2):163--255, 1994.

\end{thebibliography}

\end{document}